\let\@afterindenttrue\@afterindentfalse
\newtheorem{theorem}[equation]{Theorem}
\newtheorem{lemma}[equation]{Lemma}
\theoremstyle{definition}
\newtheorem{definition}[equation]{Definition}
\newtheorem{remark}[equation]{Remark}
\newcommand{\N}{\mathbb{N}}
\newcommand{\Noo}{\mathbb{N}_\infty}
\newcommand{\Z}{\mathbb{Z}}
\newcommand{\id}{\mathsf{id}}
\newcommand{\colim}{\mathrm{colim}}
\newcommand{\emb}{\hookrightarrow}
\newcommand{\C}{\mathscr C}
\newcommand{\E}{\mathscr E}
\newcommand{\lcond}{\mathrm{LCond}}
\newcommand{\epito}{\twoheadrightarrow}
\DeclareMathOperator{\Hom}{\underline{Hom}}
\DeclareMathOperator{\Ab}{Ab}
\DeclareMathOperator{\Set}{Set}
\title{On internally projective sheaves of groups}
\author{David Wärn}
\begin{document}
%\date{\today}
\begin{abstract}

A sheaf of modules on a site is said to be internally projective if sheaf hom
with the module preserves epimorphism. In this note, we give an example
showing that internally projective sheaves of abelian groups 
are not in general stable under base change to a slice. This shows that internal
projectivity is weaker than projectivity in the internal logic of
the topos, as expressed for example in terms of Shulman's stack semantics. 
The sheaf of groups that we use as a counterexample comes from recent work
by Clausen and Scholze on light condensed sets.

\end{abstract}
\maketitle

\section{Introduction}
An object $P$ of a category $\C$ is said to be \emph{projective}
if for any objects $X, Y$ of $\C$ and for any epimorphism $f : X \epito Y$, postcomposition
$f \circ - : \C(P, X) \to \C(P, Y)$ with $f$ is surjective.
%For short, we may say that the functor $\C(P, -)$ preserves epimorphisms.
In this note we consider a variation of this notion: if
$\C$ comes equipped with some kind of internal hom functor
$\Hom : \mathscr C^{\mathrm{op}} \times \C \to \C$, we may say
that an object $P$ is \emph{internally projective} if for any epimorphism
$f : X \epito Y$ in $\C$, the morphism $\Hom(P, f) : \Hom(P, X) \to \Hom(P, Y)$
is again an epimorphism in $\C$.
In particular we are concerned with the case where $\E$ is a topos,
$\C$ is the category $\Ab(\E)$ of abelian group objects in $\E$,
and $\Hom$ is sheaf hom.

Any topos has a rich internal language that allows to interpret ordinary
mathematical notions inside the topos \cite{geology}. In this way the notions
of `abelian group', `group homomorphism', and `surjection', when interpreted in
a topos $\E$, give rise to the notions of `abelian group object in $\E$',
`sheaf hom', and `epimorphism', respectively. Thus one might expect, as the
author once did, that the notion of `projective abelian group', when
interpreted in a topos $\E$, gives rise to the notion of `internally
projective abelian group object'. The purpose of this note is to dispel this
expectation: in fact an internally projective abelian group object need not
be projective in the internal language of $\E$.

The difference between the two notions arises from the interpretation of 
the universal quantificafier in `\emph{for all} abelian groups $X$, $Y$ \ldots'
that appears in the notion of a projective abelian group.
The point is that, as always when interpreting a universal quantificatier in a topos,
one must consider not just the things that `exist in the current stage',
but also things that `may appear in later stages'.
Explicitly, this means that an abelian group object $P \in \Ab(\E)$ is projective in the
internal language of $\E$ if for all objects $S \in \E$ 
and all abelian group objects $X, Y \in \Ab(\E / S)$ \emph{in the slice over $S$}
with an epimorphism $f : X \epito Y$, the induced morphism
\[\Hom(S^\star P, f) : \Hom(S^\star P, X) \to \Hom(S^\star P, Y)\] is an epimorphism.
Here $\Hom$ denotes sheaf hom in $\Ab(\E / S)$, and $S^\star P \in \Ab(\E / S)$ 
is the base change of $P \in \Ab(\E)$ along the unique morphism $!_S : S \to 1$
to the terminal object.
Conflating an abelian group with its underlying set, one might
write $S \times P$ instead of $S^\star P$.

In short, an abelian group object $P$ in a topos $\E$ is projective in the
internal language if and only if for every object $S$, the base change of $P$
to $\E / S$ is internally projective. 
Following Harting~\cite{harting}, we say $P$ is \emph{invariant-internally} projective in this case.
It is clear that if $P$ is invariant-internally projective, then it is 
internally projective, since we can take $S$ to be the terminal object.

The condition that $\Hom(P, f)$ is an epimorphism for a given $f : X \to Y$ can be understood more
explicitly as follows. It means that for any object $S \in \E$ and any homomorphism
$y : S^\star P \to S^\star Y$ in $\Ab(\E / S)$, there is an epimorphism $t : T
\epito S$ and a lift $x : T^\star P \to T^\star X$ such that
$T^\star f \circ x = t^\star y$. We note that, while the notion of internal
projectivity only takes into account the \emph{global} epimorphisms of $\Ab(\E)$, it
does take into account general homomorphism $y : S^\star P \to S^\star Y$ and not just the global
homomorphisms $P \to Y$.

It remains to give an example of topos $\E$ with an abelian group object $P$
and an object $S$, such that $P$ is internally projecive in $\Ab(\E)$, but 
$S^\star P$ is not internally projective in $\Ab(\E / S)$. To this end, we consider
the topos $\lcond$ of light condensed sets recently introduced by Clausen and
Scholze~\cite{analytic}.
Central to their development is the internal projectivity of a certain free abelian 
group object $\Z[\Noo]$. Here $\Noo$ is the one-point compactification of $\N$.%
\footnote{Clausen and Scholze use the notation $\N \cup \{\infty\}$ for what we call
$\Noo$. We prefer not to use this notation since
from the internal perspective $\Noo$ is \emph{not} a union of $\N$ and $\{\infty\}$.}
The contribution of this note is simply to prove that $\Noo^\star\, \Z[\Noo]$ is not
internally projective in $\Ab(\lcond / \Noo)$, so that
$\Z[\Noo]$ is internally projective but not invariant-internally projective.
%This particular example raises questions about the extent to which one can
%exploit the internal language of $\lcond$ to explain the results of Clausen and Scholze.

\subsection*{Internal injectivity and projective objects of a topos}
The situation changes if instead of internally projective abelian group objects
in $\E$, one considers internally \emph{injective} (abelian group) objects in
$\E$, or internally projective objects of $\E$. The basic picture remains the
same: for example, given an abelian group object $P$ of $\E$ one can ask if $P$
is internally injective and if it is invariant-internally injective. 
Harting~\cite[Theorem 1.1]{harting} showed that in fact every internally injective
abelian group object is invariant-internally injective.
Blechschmidt~\cite[Theorem 3.7]{ingo} showed that the same holds for 
$\E$ in place of $\Ab(\E)$, i.e.\ for \emph{objects} of $\E$.
The reason is the same in each case: it boils down to the fact that the left adjoints
$\bigoplus_S : \Ab(\E / S) \to \Ab(\E)$ and $S_! : \E/S \to \E$ preserve monomorphisms.
The dual statement, that the right adjoint $\Pi_S : \E / S \to \E$ preserves epimorphisms,
is not true in general topoi, but still it \emph{is} true that any internally
projective object of $\E$ is invariant-internally 
projective~\cite[Chapter IV, Exercise 16(b)]{geology}.
We explain this in \cref{object} and show why the argument fails for abelian group objects.

For the reader looking to read more on this topic, we refer to a recent article of
Christensen and Taxerås Flaten~\cite{ext}, in which the question answered in this note
was posed.

\subsection*{Quantification over objects}
In this note we have no reason to treat formally the notion of
`interpreting a sentence in the internal language of $\E$', but one
remark is in order: while the traditional Kripke--Joyal semantics for the internal
language of a topos does not support quantification over objects,
Shulman's stack semantics \cite[Definition 7.2]{shulman} does. Thus
while we work with the notion of `projective in the internal language' in an ad hoc
manner, it can be obtained more systematically.

\subsection*{Acknowledgements}
I would like to thank Thierry Coquand for helpful discussions and Peter Scholze for
elaborating on the proof that $\Z[\Noo]$ is internally projective.
This note was born out of ongoing work by Felix Cherubini, Thierry Coquand, 
Freek Geerligs and Hugo Moeneclaey on the internal logic of the topos of light condensed sets~\cite{ssd}.

\section{Light condensed sets and internal projectivity}

We recall the definition of light condensed sets introduced by
Clausen and Scholze \cite{analytic}.

\begin{definition}
A \emph{light profinite set} is a topological space that arises as a countable limit of
finite discrete sets.
The category of profinite sets and continuous maps has a \emph{coverage},
where a cover of $S$ is given by a finite collection of maps $\{p_i  : T_i \to S \mid i = 1,\ldots,n \}$
that are \emph{jointly} surjective.

The topos $\lcond$ of \emph{light condensed sets} is the category of sheaves on the above site.
\end{definition}

The category of light profinite sets admits several equivalent definitions;
for example it is dual to the category of countably presented Boolean algebras.

For our purposes, one light profinite set plays a distinguished role: the
one-point compactification $\Noo$ of the naturals, also known as the generic
convergent sequence. This can be described as the limit of the sequence
$\{\infty\} \leftarrow \{0, \infty\} \leftarrow \{0, 1, \infty\} \leftarrow \cdots$,
where the map $\{0,\cdots,n+1, \infty\} \to \{0,\cdots,n, \infty\}$
fixes every element except $n+1$ which is mapped to $\infty$.
We write $\Noo$ also for the corresponding representable object of $\lcond$.
As a topological space, we have $\Noo = \N \cup \{\infty\}$, and 
this is the notation Clausen and Scholze use for what we call $\Noo$.
In $\lcond$, we have an embedding $\N \emb \Noo$, where
$\N$ denotes the constant sheaf, and an element
$\infty : 1 \to \Noo$. But there is also `more' to $\Noo$ in the following sense.

\begin{lemma}\label{not-jointly-epic}
The two natural maps $\N \emb \Noo$ and $\infty : 1 \to \Noo$
are \emph{not} jointly epic in $\lcond$.
\end{lemma}
That is, the corresponding embedding $\N \sqcup \{\infty\} \emb \Noo$, with the coproduct computed in $\lcond$, is 
not an isomorphism.
\begin{proof}
By \cite[Chapter VI.7]{geology}, the maps are jointly epic if and only if
there is a cover ${\{T_i \to \Noo\}_i}$ of $\Noo$ such that for each $i$, 
the map $T_i \to \Noo$ factors through $\N$ or through $\infty : 1 \to \Noo$.
By compactness, any map $T_i \to \N$ has finite image, so since there
are only finitely many $i$, the $T_i$ can only cover finitely many elements
of $\N$. Hence there is no such cover of $\Noo$.
\end{proof}

We will eventually see that $\Z[\Noo] \in \Ab(\lcond)$ is internally projective
but not invariantly so.
Here $\Z[-]$ denotes the left adjoint to the forgetful functor
$\Ab(\lcond) \to \lcond$; in other words it is the internalisation of the free
abelian group construction to $\lcond$.
We first recall an important lemma used by Clausen and Scholze
in their proof that $\Z[\Noo]$ is internally projective \cite{analytic}. 
Although this result is already established in \cite{analytic}, we present the 
proof in some detail in order to highlight why it does not
translate to the internal logic of $\lcond$.

\begin{lemma}\label{magic-retract}
Let $\iota : S \emb T$ be an embedding of light profinite sets.
If $S$ has a point, then $\iota$ has a retraction $r : T \to S$
with $r \circ \iota = \id_{S}$.
\end{lemma}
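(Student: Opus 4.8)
The plan is to realize both $S$ and $T$ as sequential inverse limits of finite discrete sets and to build $r$ as a limit of retractions between the finite stages. Write $T = \lim_n T_n$ with transition maps $\pi_n : T_{n+1} \to T_n$ and projections $q_n : T \to T_n$, and set $S_n := q_n(\iota(S)) \subseteq T_n$, the image of $S$ at stage $n$. The restrictions of the $\pi_n$ give transition maps $\sigma_n : S_{n+1} \to S_n$ making $\{S_n\}$ an inverse system with $S_n \subseteq T_n$ compatibly. The goal becomes: construct retractions $r_n : T_n \to S_n$ with $r_n|_{S_n} = \id_{S_n}$ that commute with the transition maps, i.e.\ $\sigma_n \circ r_{n+1} = r_n \circ \pi_n$. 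Such a compatible family is precisely a morphism of inverse systems $\{T_n\} \to \{S_n\}$, hence induces $r : T \to \lim_n S_n$ in the limit, and the condition $r_n|_{S_n} = \id$ yields $r \circ \iota = \id_S$.

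For this to prove the lemma I first need $\lim_n S_n = S$, so that $r$ really lands in $S$. The inclusion $S \subseteq \lim_n S_n$ is clear. Conversely, given a compatible point $(x_n) \in \lim_n S_n$, the sets $C_n := \{s \in S : q_n(s) = x_n\}$ are closed in $S$, nonempty (as $x_n \in S_n$ is the image of a point of $S$), and nested by compatibility of $(x_n)$; by compactness of $S$ their intersection is nonempty, and any $s$ in it satisfies $q_n(s) = x_n$ for all $n$, so $\iota(s) = (x_n)$ and $(x_n) \in S$. The same description shows each $\sigma_n : S_{n+1} \to S_n$ is surjective, since $S \to S_n$ factors through $S_{n+1}$ and has image all of $S_n$.

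Now construct the $r_n$ by induction on $n$, using that $S$ has a point to ensure every $S_n$ is nonempty. For the base case, pick $r_0 : T_0 \to S_0$ to be the identity on $S_0 \subseteq T_0$ and constant at some fixed point of $S_0$ elsewhere. Given $r_n$, define $r_{n+1} : T_{n+1} \to S_{n+1}$ as follows: for $t \in T_{n+1}$ the element $r_n(\pi_n(t)) \in S_n$ has nonempty fiber under the surjection $\sigma_n$, so we may choose $r_{n+1}(t) \in \sigma_n^{-1}(r_n(\pi_n(t)))$, while for $t \in S_{n+1}$ we set $r_{n+1}(t) = t$. This last choice is consistent because for $t \in S_{n+1}$ we have $\pi_n(t) = \sigma_n(t) \in S_n$ and hence $r_n(\pi_n(t)) = \sigma_n(t)$, so $t$ indeed lies in the prescribed fiber. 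By construction $\sigma_n \circ r_{n+1} = r_n \circ \pi_n$ and $r_{n+1}|_{S_{n+1}} = \id$, completing the induction.

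The main point --- and the place where both hypotheses are used --- is the inductive step. Lightness is what makes the limits sequential, so that the retractions can be chosen one stage at a time with no coherence conditions beyond the single previous stage; for a general (non-sequential) profinite set this level-by-level choice would not be available. The assumption that $S$ has a point is exactly what keeps every $S_n$ nonempty, which is needed both to start the induction and, together with surjectivity of $\sigma_n$, to guarantee that the fibers from which we select $r_{n+1}(t)$ are nonempty. I expect the only genuinely delicate verifications to be the identification $\lim_n S_n = S$ and the consistency check that setting $r_{n+1}(t) = t$ on $S_{n+1}$ agrees with the lifting condition; everything else is formal manipulation of the inverse systems.
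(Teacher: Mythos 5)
Your proof is correct, and it takes a genuinely different route from the paper's. The paper presents $S$ (not $T$) as a sequential limit of finite sets $A_i$ with surjective transition maps and $A_0$ a point, and then builds $r$ by solving a lifting problem $T \to A_{i+1}$ at each stage; the substance there is topological, namely that two disjoint closed subsets of a profinite set can be separated by a clopen (proved by compactness and basic clopens, after reducing to the case where $A_{i+1} \to A_i$ is a two-point set over a point). You instead present $T$ as a sequential limit and carry $S$ along as the images $S_n = q_n(\iota(S))$, which makes the stage-by-stage extension combinatorially trivial: surjectivity of $\sigma_n : S_{n+1} \to S_n$ means every required fiber is nonempty, and the constraint $r_{n+1}|_{S_{n+1}} = \id$ is automatically compatible. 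All the topology in your argument is concentrated in the single standard fact $\lim_n S_n = \iota(S)$, proved by compactness of $S$; your verification of this and of the consistency of the fiber choices is complete. What each approach buys: yours is shorter and isolates the topological input in one clean identification; the paper's makes explicit the clopen-separation step and, more importantly for its purposes, the step of replacing the $A_i$ by images to force surjectivity --- exactly the classical (non-constructive) move that \cref{invariant-failure} points to as the obstruction to internalizing the lemma. Note that your proof relies on the same non-constructive ingredient, since forming the finite images $S_n$ (and concluding they are finite, complemented subsets of $T_n$) uses excluded middle in just the same way, so it is equally consistent with the paper's claim that the statement fails internally; it just makes that failure point less visible.
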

\begin{proof}
By assumption, we may write $S$ as the limit of a sequence
$A_0 \leftarrow A_1 \leftarrow \cdots$ of finite sets.
The maps $A_{i+1} \to A_i$ need not be surjective,
but by replacing $A_i$ with the image of the projection $S \to A_i$
if necessary, we may without loss of generality assume that
$A_{i+1} \to A_i$ is surjective.
We may also assume that $A_0$ has a single element
(using the assumption that $S$ has a point to ensure that 
 the maps are still surjective).
Now the problem of finding a retraction $r : T \to S$
is equivalent to finding a sequence of maps $r_i : T \to A_i$
with certain compatibility conditions expressed by the commutativity of the following
square.
% https://q.uiver.app/#q=WzAsNCxbMCwwLCJTIl0sWzAsMSwiVCJdLFsxLDAsIkFfe2krMX0iXSxbMSwxLCJBX2kiXSxbMCwxLCJcXGlvdGEiLDAseyJzdHlsZSI6eyJ0YWlsIjp7Im5hbWUiOiJob29rIiwic2lkZSI6InRvcCJ9fX1dLFswLDJdLFsyLDMsIiIsMix7InN0eWxlIjp7ImhlYWQiOnsibmFtZSI6ImVwaSJ9fX1dLFsxLDMsInJfaSIsMl0sWzEsMiwiIiwyLHsic3R5bGUiOnsiYm9keSI6eyJuYW1lIjoiZG90dGVkIn19fV1d
\[\begin{tikzcd}[ampersand replacement=\&,cramped]
	S \& {A_{i+1}} \\
	T \& {A_i}
	\arrow[from=1-1, to=1-2]
	\arrow["\iota", hook, from=1-1, to=2-1]
	\arrow[two heads, from=1-2, to=2-2]
	\arrow[dotted, from=2-1, to=1-2]
	\arrow["{r_i}"', from=2-1, to=2-2]
\end{tikzcd}\]
Building the $r_i$ by induction on $i$, the problem amounts to
showing that the square above admits a diagonal filler (which will be $r_{i+1}$).
Now $A_i$ is a finite set and we can argue separately over each fibre of $A_i$,
so without loss of generality $A_i$ consists of a single point.
Then $A_{i+1}$ is a nonempty finite set, and without loss of generality it
contains exactly two elements (the case of one element is trivial, and if $A_{i+1}$ has more
than two elements then we can find a factorisation $A_{i+1} \to \{0,1\} \to 1$
such that
the fibres of $A_{i+1} \to \{0,1\}$ are smaller than $A_{i+1}$, and proceed by induction).

Thus we consider the lifting problem above where $A_i$ consists of a point and $A_{i+1}$ of two points.
The map $S \to A_{i+1}$ exhibits $S$ as a disjoint union of spaces $X$, $Y$, and $\iota$ exhibits $X$ and $Y$
as disjoint closed subsets $X, Y \subseteq T$.
It remains to show that $T$ has a clopen subset $C \subseteq T$ such that
$X \subseteq C$ and $Y \cap C = \emptyset$; this then determines a map $T \to A_{i+1}$ with the desired property.

In order to find $C$, we first write $X$ and $Y$ as intersections $X = \bigcap_i X_i$, $Y = \bigcap_j Y_j$
of basic clopen subsets $X_i$, $Y_j$ of $T$. 
Since the intersection $\bigcap_i X_i \cap \bigcap_j Y_j = X \cap Y$ is empty and $T$ is compact,
we can find a finite set of clopens $X_i$ that have empty intersection with $Y$.
The intersection of these clopens gives a clopen $C$ as desired.
\end{proof}
\begin{remark}\label{invariant-failure}
As we will see, \cref{magic-retract} is used in the proof of the internal projectivity of $\Z[\Noo]$.
In order to prove $\Z[\Noo]$ \emph{invariant}-internally projective, 
one would crucially need the following strengthening:
if $S, T, R$ are light profinite sets with maps $\iota : S \to T$, $p : T \to R$, and
$s : R \to S$ such that $p \circ \iota \circ s = \id_R$ 
(so that $s$ determines a point of $S$ viewed as an object of the slice over $R$),
	then $R$ has a cover $f : \widetilde R \epito R$ such that
	$f^\star \iota : f^\star S \emb f^\star T$ has a retraction \emph{in the slice over} $\widetilde R$.
This strengthening is false, by the following counterexample: consider the
embedding $\Noo \sqcup \{\infty\} \emb \Noo \sqcup \Noo$ viewed in the slice over $\Noo$.
Defining the retraction on the second summand of $\Noo \sqcup \Noo$ amounts to proving
that $\{\infty\} \emb \Noo$ is complemented, which is simply not true, essentially by
\cref{not-jointly-epic}.

On the other hand one could imagine adapting the proof of \cref{magic-retract}
to the internal logic of $\lcond$ in order to prove the (false) strengthening,
and it is instructive to consider where this goes wrong.
It turns out that the internal logic of $\lcond$ permits a great deal of reasoning
about light profinite sets~\cite{ssd}, but importantly it does not validate the law of excluded middle;
$\lcond$ is not a Boolean topos.
This means that one of the first steps, where we reduce to the case of the maps $A_{i+1} \to A_i$
being surjective by replacing $A_i$ with the image of $S \to A_i$,
	does \emph{not} go through; constructively it is not true that
	every subset of a finite set is finite. This is only true if the subset is complemented,
	and there is no reason to expect that the image of $S \to A_i$ is complemented constructively.%
	\footnote{More precisely, the problem is that the \emph{limited principle of omniscience} (LPO),
		an instance of the law of excluded middle, does not hold in $\lcond$.}
\end{remark}

For completeness, we now reproduce Clausen and Scholze's proof that $\Z[\Noo]$ is internally projective,
	with slight modifications to fit the language of this note.
\begin{theorem}[{cf. \cite[Proposition 3.15]{complex}}]\label{noo-proj}
The abelian group object $\Z[\Noo] \in \Ab(\lcond)$ is internally projective.
\end{theorem}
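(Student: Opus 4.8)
The plan is to unwind internal projectivity of $\Z[\Noo]$ into a concrete lifting problem for continuous maps out of $\Noo$, and then to solve that lifting problem by means of the retraction lemma \cref{magic-retract}, crucially exploiting the distinguished point $\infty$ of $\Noo$. First I would use the defining adjunction $\Z[-] \dashv U$ between $\lcond$ and $\Ab(\lcond)$, which is compatible with base change to slices (so that $S^\star \Z[\Noo] \cong \Z_{/S}[S^\star\Noo]$). This identifies, for any $Y \in \Ab(\lcond)$, the sheaf hom $\Hom(\Z[\Noo], Y)$ with the cotensor $Y^\Noo$, and turns the explicit description of internal projectivity from the introduction into the following statement: for every epimorphism $f : X \epito Y$ in $\Ab(\lcond)$, every object $S$, and every map $g : S\times\Noo \to Y$ of underlying condensed sets, there is a cover $t : T \epito S$ and a lift $\tilde g : T\times\Noo \to X$ with $f\circ\tilde g = g\circ(t\times\id)$. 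Since it suffices to produce the lift locally on $S$, and the light profinite sets generate $\lcond$, I may assume that $S$—and hence $S\times\Noo$—is light profinite.

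Next, since $f$ is an epimorphism of abelian group objects its underlying map is a local epimorphism of sheaves; applied to the profinite set $S\times\Noo$ and the map $g$, this yields a surjection of light profinite sets $q : W \epito S\times\Noo$ together with a map $h : W \to X$ satisfying $f\circ h = g\circ q$. The content of the theorem is now to promote this \emph{arbitrary} local lift to one that is pulled back from a cover of $S$, i.e.\ defined on a product $T\times\Noo$. This is exactly the step that uses the point $\infty$: the fibre $W_\infty := q^{-1}(S\times\{\infty\})$ is a nonempty light profinite set, so taking $T := W_\infty$ with $t : T\epito S$ induced by $q$ produces, after base change along $t\times\id_\Noo$, a canonical section of $q$ over $T\times\{\infty\}$ (the diagonal of $W_\infty\times_S W_\infty$). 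Applying \cref{magic-retract} to the resulting pointed embedding of this $\infty$-section into the total space, I would use the retraction to extend the section across the convergent sequence, obtaining a section of $q$ over $T\times\Noo$; composing with $h$ then gives the desired product-form lift $\tilde g$.

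The main obstacle, as I see it, is precisely this last extension step: organizing the fibrewise lifts into a single \emph{continuous} section over $T\times\Noo$, with the correct behaviour as one approaches $\infty$. This is where the metrizable, convergent-sequence structure of $\Noo$ and its point $\infty$ are indispensable, and where \cref{magic-retract} does the real work—just as, in the proof of that lemma, the retraction is built by induction over the profinite tower $\Noo = \lim_n A_n$. It is worth keeping in mind that this is also exactly the step that refuses to strengthen to the invariant setting: as \cref{invariant-failure} explains, over a nontrivial base the relevant $\infty$-section need not exist, essentially because $\{\infty\}\emb\Noo$ is not complemented (\cref{not-jointly-epic}), so the same strategy cannot prove $\Z[\Noo]$ invariant-internally projective.
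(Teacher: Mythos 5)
Your reduction is fine and matches the paper's proof up to the point where you have a surjection of light profinite sets $q : W \epito S \times \Noo$ and a map $h : W \to X$ with $f \circ h = g \circ q$. The fatal step is the one you yourself flag as the main obstacle: ``use the retraction to extend the section across the convergent sequence, obtaining a section of $q$ over $T \times \Noo$.'' This fails for two reasons. First, \cref{magic-retract} produces a \emph{retraction}, i.e.\ a map from the ambient space onto the pointed closed subspace; it goes in the wrong direction and gives no mechanism for producing a map $T \times \Noo \to T \times_S W$ extending the diagonal $\infty$-section. Second, and more fundamentally, no such section need exist for \emph{any} cover $T \epito S$, so the step cannot be repaired. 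Notice that your argument never uses the abelian group structure of $X$: if it were correct, the same reasoning would show that the \emph{object} $\Noo$ is internally projective in $\lcond$, and this is false. Concretely, take the space $X \subseteq \Noo \times \Noo \times \{0,1\}$ from the proof of \cref{main-result} with its surjection $\pi : X \epito \Noo \times \Noo$, take $S = \Noo$ and $g = \id$. For any cover $t : T \epito \Noo$, a section of the pullback of $\pi$ along $t \times \id_\Noo$ is forced, at every point $(s,j)$ with $j \in \N$ or $t(s) \in \N$, to take the value $e = 1$ if and only if $t(s) = j$. Since $t$ is surjective and $T$ compact, $t^{-1}(\N)$ cannot be closed, so there is $s^\ast$ with $t(s^\ast) = \infty$ and a sequence $s_n \to s^\ast$ with $t(s_n) \in \N$. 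Then $(s_n, t(s_n)) \to (s^\ast, \infty)$ forces $e(s^\ast, \infty) = 1$, while $(s_n, t(s_n)+1) \to (s^\ast, \infty)$ forces $e(s^\ast, \infty) = 0$; no continuous section exists. This is exactly the non-complementedness of $\{\infty\} \emb \Noo$ (\cref{not-jointly-epic}, \cref{invariant-failure}) at work.

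The missing idea is that the paper never splits $p : T \epito S \times \Noo$; instead it modifies $g$ so that it \emph{descends} along $p$ even though $p$ has no section, and this is precisely where the group structure (in fact only a Malcev operation) is indispensable. After arranging, by countable choice and by shrinking $T$, that $p$ is an isomorphism over $S \times \N$, and choosing via \cref{magic-retract} a retraction $r : T \to T_\infty$ of the fibre over $\infty$ together with a section $h : S \to T_\infty$ of $p_\infty$ (after a further cover of $S$), the paper sets $\widetilde g = g - g \circ \iota \circ r + g \circ \iota \circ h \circ p_\infty \circ r$. Descent along $p$ is automatic over $S \times \N$ (where $p$ is an isomorphism), and over $\infty$ one computes $\widetilde g \circ \iota = g \circ \iota \circ h \circ p_\infty$, which factors through $S$; hence $\widetilde g$ equalizes the two projections $T \times_{S \times \Noo} T \rightrightarrows T$ and descends along the effective epimorphism $p$, giving the lift $x : S \times \Noo \to X$. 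So the retraction is applied on the source side of $g$ and corrected by group operations, not used to split the cover; without some such use of the additive structure there can be no proof, which is the whole point of \cref{main-result}.
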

\begin{proof}
Suppose $f : X \epito Y$ is an epimorphism in $\Ab(\lcond)$, $S \in \lcond$ is
an arbitrary object and $y : S^\star \Z[\Noo] \to S^\star Y$ is some map.
By the universal property of $\Z[\Noo]$ (which is stable under pullback)
we can equivalently write $y$ as
$y : S \times \Noo \to Y$, where we conflate the abelian group object $Y$ with
the underlying object of $\lcond$.
Without loss of generality, we may assume $S$ is representable, so
that $S \times \Noo$ is also representable.
Since $f : X \epito Y$ is epic, we can find a cover $p : T \epito S \times \Noo$ with $T$
representable and a commutative square as follows.
% https://q.uiver.app/#q=WzAsNCxbMCwwLCJUIl0sWzAsMSwiUyBcXHRpbWVzIFxcTm9vIl0sWzEsMCwiWCJdLFsxLDEsIlkiXSxbMCwxLCIiLDAseyJzdHlsZSI6eyJoZWFkIjp7Im5hbWUiOiJlcGkifX19XSxbMiwzLCJmIiwwLHsic3R5bGUiOnsiaGVhZCI6eyJuYW1lIjoiZXBpIn19fV0sWzAsMl0sWzEsMywieSIsMl1d
\[\begin{tikzcd}[ampersand replacement=\&,cramped]
	T \& X \\
	{S \times \Noo} \& Y
	\arrow["g", from=1-1, to=1-2]
	\arrow["p", two heads, from=1-1, to=2-1]
	\arrow["f", two heads, from=1-2, to=2-2]
	\arrow["y"', from=2-1, to=2-2]
\end{tikzcd}\]

The problem is to show that after passing to a cover of $S$ we can find a map $x : S \times \Noo \to X$
such that $f \circ x = y$.

We claim that, after passing to a cover of $S$ if necessary, we may assume that
the pullback of $p$ along $S \times \N \emb S \times \Noo$ has a section.
First consider, for each $n \in \N$, the pullback of $p$ along
$S \simeq S \times \{n\} \emb S \times \Noo$. Denote the corresponding object over $S$
by $T_n$. Let $\widetilde S$ denote the product of $T_n$ over $n \in \N$ computed in the
slice over $S$. Then $\widetilde S$ is again a light profinite set, and $\widetilde S \to S$
is surjective by countable choice. By construction, the pullback of $p$ along
$\widetilde S \times \N \emb \widetilde S \times \Noo \epito S \times \Noo$ has a section,
finishing the proof of the claim.

Next, given a section $t : S \times \N \emb T \times_{\Noo} \N$, we claim that without loss of generality
we may assume $t$ is an isomorphism. Note that for each $n \in \N$, $T \times_{\Noo} \{n\}$ is a clopen subset
of $T$, and that $t(S \times \{n\})$ is a closed subset of this clopen.
Consider now the intersection $\widetilde T$ of 
\[T \times_{\Noo} (\Noo \setminus \{n\}) \sqcup t(S \times \{n\}) \subseteq T \]
over all $n$. This $\widetilde T$ is again a light profinite set, being a closed subset of $T$,
and the map $\widetilde T \to S \times \Noo$ is again surjective. 
By considering $\widetilde T$ in place of $T$ we get that $t$ becomes an isomorphism as needed.
From now on we simply assume that $t$ is an isomorphism.

Consider now the pullback 
$T_\infty \coloneqq T \times_{\Noo} \{\infty\}$.
Since $\{\infty\} \emb \Noo$ is a closed embedding, so is $\iota : T_\infty \emb T$.
By~\cref{magic-retract},
there exists a retraction $r : T \to T_\infty$.
We may also assume that $p_\infty : T_\infty \epito S$ has a section, $h : S \emb T_\infty$
(by passing to the cover $T_\infty \epito S$ if necessary).

Now define $\widetilde g :  T \to X $ by the following formula using the group structure on $X$.%
\footnote{The only aspect of the abelian group structure we use is the existence of a
	\emph{Malcev operation}. That is, an operation
		$\mu(x, y,z)$ (namely, $\mu(x, y, z) = x - y + z$) such that $\mu(x, z, z) = x$ and
		$\mu(x, x, z) = z$.}
\[
	\widetilde g \coloneqq g - g \circ \iota \circ r + g \circ \iota \circ h \circ p_\infty \circ r		
\]
We claim that $f \circ \widetilde g = y \circ p$ and that $\widetilde g$ extends along $p$.
After this we will be done, since the extension of $\widetilde g$ along $p$ is a map 
$x : S \times \Noo \to X$ with $f \circ x = y$.

We first explain why $f \circ \widetilde g = y \circ p$.
Since $f \circ g = y \circ p$ and $f$ is a group homomorphism, it suffices to show that
$y \circ p \circ \iota \circ r = y \circ p \circ \iota \circ h \circ p_\infty \circ r$.
We have that $p \circ \iota = \iota_S \circ p_\infty$ where $\iota_S : S \emb S \times \Noo$
is the inclusion of $S \times \{\infty\}$. Using this, the claim reduces to
the fact that $h$ is a section to $p_\infty$, i.e.\ $p_\infty \circ h = \id_{S \times \Noo}$.

Now we explain why $\widetilde g$ extends along $p$.
Since epimorphisms in a topos are effective, it suffices to show that for the two projections
$\pi_1, \pi_2 : T \times_{S \times \Noo} T \to T$, we have $\widetilde g \circ \pi_1 = \widetilde g \circ \pi_2$.
Note that $T \times_{S \times \Noo} T$ is a pullback of representables and so is itself representable.

We claim that the evident map
$\alpha : T \sqcup T_\infty \times_S T_\infty \to T \times_{S \times \Noo} T$ is a an epimorphism.
Since surjections of profinite sets are covers in our site, it suffices to check that
this map is a surjection on the level of profinite sets. 
Thus suppose we are given two points $t_1, t_2$ of the profinite set $T$ with $p(t_1) = p(t_2)$.
The second component of $p(t_1)$ is a point of $\Noo$, and we are free to assume that this is either
in $\N$ or $\infty$. In the first case, we have $t_1 = t_2$ as $p$ pulls back to an isomorphism
over $S \times \N$. Thus in this case $(t_1, t_2)$ is hit by the first component
$T$ of $T \sqcup T_\infty \times_S T_\infty$.
In the second case, $(t_1, t_2)$ tautologically lies in $T_\infty \times_S T_\infty$.

Since $\alpha$ is an epimorphism, in order to show that $\widetilde g \circ \pi_1 = \widetilde g \circ \pi_2$, it
suffices to show that $\widetilde g \circ \pi_1 \circ \alpha = \widetilde g \circ \pi_2 \circ \alpha$.
We check this separately on the two summands of $T \sqcup T_\infty \times_S T_\infty$.
On the first summand it is trivial.
Thus we are left to show that the two composites 
\[T_\infty \times_S T_\infty \rightrightarrows T_\infty \xhookrightarrow{\iota} T \xrightarrow{\widetilde g} X\] are equal.
We have $g \circ \iota = g \circ \iota \circ r \circ \iota$ since $r \circ \iota = \id_{T_\infty}$.
Thus $\widetilde g \circ \iota = g \circ \iota \circ h \circ p_\infty \circ r \circ \iota = 
g \circ \iota \circ h \circ p_\infty$.
This evidently factors through $p_\infty : T_\infty \to S$, finishing the proof.
\end{proof}

\begin{remark}
Let us point out what goes wrong in the proof above if one 
tries to show the stronger statement that $\Z[\Noo]$ is invariant-internally projective.
In this case, rather than $X, Y : \Ab(\lcond)$, we would have $X, Y : \Ab(\lcond / S)$.
Thus, in order to use the group operation on $X$ in defining $\widetilde g$, 
we would have to know that the terms going into $\widetilde g$ are maps in the slice $\lcond / S$ over $S$.
The only problem with this is that the retraction $r : T \to T_\infty$
\emph{cannot} in general be taken to respect the maps down to $S$ (cf.\ \cref{invariant-failure}).
\end{remark}

We now turn to the main result of this note.
\begin{theorem}\label{main-result}
The abelian group object $\Noo^\star \Z[\Noo] \in \Ab(\lcond / \Noo)$ is \emph{not}
internally projective. Thus, while $\Z[\Noo] \in \Ab(\lcond)$ is internally projective,
		   it is not invariant-internally projective.
\end{theorem}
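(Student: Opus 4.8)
The plan is to apply the explicit description of internal projectivity from the introduction. Write $P := \Noo^\star\,\Z[\Noo]$, and let $G := \Noo^\star\Noo$ be its generating object, so that $P = \Z[G]$ since $\Z[-]$ is stable under base change. As an object of $\lcond/\Noo$, $G$ is the first projection $\Noo\times\Noo\to\Noo$; I write $m$ for the base coordinate and $n$ for the fibre coordinate, so that each fibre of $G$ is a copy of $\Noo$ carrying the non-complemented inclusion $\N\emb\Noo$ of \cref{not-jointly-epic}. To disprove internal projectivity it is enough to exhibit one abelian epimorphism $f$ for which $\Hom(P, f)$ is not an epimorphism. Taking for the test data the terminal object $1\in\lcond/\Noo$ and the identity $y = \id_P$, this reduces to finding a cover $p : T\epito G$ in $\lcond/\Noo$ such that $\Z[p] : \Z[T]\epito P$ admits no splitting, not even after base change along any cover of $\Noo$.

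The construction should mirror the proof of \cref{noo-proj}, while being immune to the device that makes it work. There, a lift was produced by retracting the cover onto its $\infty$-fibre by means of \cref{magic-retract}; over the base this retraction is unavailable, as \cref{invariant-failure} records. I would therefore choose $p$ to couple the two ``ends'' of $G$, namely the base point $m = \infty$ and the fibre point $n = \infty$: the restriction of $p$ over the fibre point $\infty$ should be a cover of the base $\Noo$ with no section (even locally), while the gluing along the finite fibres $n\in\N$ is arranged so that no splitting of $\Z[p]$ is compatible with it. It is essential that $p$ not be pulled back from $\lcond$ along $\Noo\to 1$: for such a $p$ the internal projectivity of $\Z[\Noo]$ (\cref{noo-proj}) would already split $\Z[p]$ after a cover. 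This coupling of the two ends is exactly the phenomenon isolated in \cref{invariant-failure}.

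That no splitting exists I would prove by the compactness argument of \cref{not-jointly-epic}. A splitting $\sigma : G\to\Z[T]$, over any cover $c : \widetilde\Noo\epito\Noo$, restricts along the fibre-$\infty$ section $m\mapsto(m,\infty)$ and, by continuity as $n\to\infty$, is pinned down by its values on the finite fibres. Tracing this through, a splitting would yield a continuous choice separating the fibre point $\infty$ from the finite part $\N$, uniformly in the base. But such a separation cannot survive the base point $\infty$: for any cover $c$ the preimage $c^{-1}(\N)$ is not closed --- otherwise its image $\N$ would be closed in $\Noo$ --- so points lying over finite $m$ accumulate onto $c^{-1}(\infty)$, exactly as in \cref{not-jointly-epic}. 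Hence the separation fails over every cover, and $\Z[p]$ is nowhere locally split.

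The step I expect to be the main obstacle is ruling out splittings that exploit cancellation. Because $\sigma$ takes values in a free abelian group, a generator need not be sent to a single basis element but to any integer combination of the correct total degree; one cannot therefore read off a set-level section, and the clean picture above must be justified. The remedy is an augmentation and rank argument: composing with the augmentation $\Z[T]\to\Z$ shows that each value of $\sigma$ has degree one, and tracking how the fibre rank of the relevant kernel jumps across the base point $\infty$ forces any continuous integral splitting into a discontinuity at the corner $(m, n) = (\infty, \infty)$, reproducing the obstruction of \cref{not-jointly-epic} in the presence of coefficients. Making this bookkeeping precise, uniformly over all covers of the base, and exhibiting an explicit coupled cover $T$ that realises it, is the technical heart of the proof.
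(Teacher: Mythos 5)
Your overall framing matches the paper's: you reduce the failure of internal projectivity to exhibiting a cover $p : T \epito \Noo^\star\Noo$ in $\lcond/\Noo$ such that $\Z[p] : \Z[T] \epito \Noo^\star\Z[\Noo]$ admits no section even after base change along any cover of $\Noo$, and you correctly note that $T$ must genuinely couple the base and fibre coordinates, since for a cover pulled back from $\lcond$, \cref{noo-proj} would produce a local splitting. But the proposal stops exactly where the proof has to start. You never exhibit $T$: the paper's choice is the closed subset $X \subseteq \Noo\times\Noo\times\{0,1\}$ of triples $(i,j,e)$ such that $e = 1$ if and only if $i = j$, whenever $i \in \N$ or $j \in \N$ --- that is, the double of $\Noo\times\Noo$ at the corner $(\infty,\infty)$, with the extra bit $e$ recording ``$i=j$''. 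And you never prove the non-splitting claim; by your own admission, making the bookkeeping precise and exhibiting the cover ``is the technical heart of the proof''. So what you have is a plausible plan, not a proof.

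Moreover, the mechanism you propose for the non-splitting is not the right one, and one of your stated desiderata is unsatisfiable: you ask that the restriction of $p$ over the fibre point $\infty$ be a cover of $\Noo$ ``with no section (even locally)''. No such cover exists, because any epimorphism in this site splits after base change along itself. Indeed, in the paper's construction every fibrewise restriction of $\pi$ \emph{does} admit a section; the obstruction is not topological non-splitness of any restriction, but a failure of \emph{decidability}. The paper's \cref{key-internal} shows, in the internal logic, that a section of $\Z[\pi]$ at stage $i \in \Noo$ would decide ``$i = \infty$ or $i \ne \infty$'', and \cref{not-jointly-epic} says precisely that no epimorphic family can make this decision. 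Your worry about ``splittings that exploit cancellation'' is exactly the right worry --- a section of the free abelian group map need not come from a set-level section --- but the augmentation-and-rank argument you sketch is not carried out, and the paper's substitute for it is of a different nature: the constructive ``collecting like terms'' lemma (\cref{eq-coprod}), combined with the fact that the extra coordinate $e$ makes equality of basis elements lying over a common point of $\Noo$ testable, which is exactly what converts a hypothetical section into the forbidden decision procedure.
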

\begin{proof}
Consider the closed subset $X$ of $\Noo \times \Noo \times \{0,1\}$ consisting of triples
$(i, j, e)$ such that if either $i \in \N$ or $j \in \N$, then $e = 1$ if and only if $i = j$.
Thus $X$ is a light profinite set and comes with a surjection $\pi : X \epito \Noo \times \Noo$ 
which is an isomorphism away from $(\infty, \infty)$ -- the fibre of $X$ over
$(\infty, \infty)$ consists of two points.
Let $\pi_1 : X \to \Noo$ denote the first projection, so that
$(X, \pi_1)$ defines an object of $\lcond / \Noo$.
Applying free abelian groups to the epimorphism $\pi : (X, \pi_1) \epito \Noo^\star \Noo$ in $\lcond / \Noo$,
we obtain an epimorphism $f : \Z[(X, \pi_1)] \epito \Noo^\star \Z[\Noo]$
in $\Ab(\lcond / \Noo)$.

We now claim that there is \emph{no} cover of $\Noo$ such that
$f$ has a section after pulling back to this cover.
This implies that $\Noo^\star \Z[\Noo]$ is not internally projective.

More precisely, we establish the following claim using the internal language of $\lcond$ in \cref{key-internal}. 
Let $i : I \to \Noo$ be an arbitrary map in $\lcond$, and
suppose that $s : i^\star \Noo^\star \Z[\Noo] \to i^\star \Z[(X, \pi_1)]$ is a section to 
$i^\star f$. The claim is that $i$ factors through
$\N \sqcup \{\infty\} \emb \Noo$, so that $i$ is non-epic by \cref{not-jointly-epic}.
\end{proof}

\begin{lemma}\label{key-internal}
We work in the internal logic of $\lcond$. Let $i \in \Noo$ be arbitrary.
%\footnote{Note that, by the Kripke--Joyal semantics, when we say `let $i \in \Noo$ be arbitrary', externally
%	this corresponds to a quantification over generalised elements $i : I \to \Noo$.}.
Define $X_i$ to be the set of pairs $(j, e) \in \Noo \times \{0,1\}$
such that if either $i \ne \infty$ or $j \ne \infty$, then $e = 1$ if and only if $i = j$.
Suppose $s : \Z[\Noo] \to \Z[X_i]$ is a section to $\Z[\pi]$ where $\pi : X_i \to \Noo$ is the
first projection. Then $i = \infty$ or $i \ne \infty$.
\end{lemma}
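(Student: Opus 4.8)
The plan is to distill the section $s$ into a single integer whose value decides whether $i = \infty$. Let $\epsilon\colon \Z[X_i] \to \Z$ be the homomorphism induced by the function $X_i \to \Z$, $(j,e) \mapsto e$, and set
\[
  D \;:=\; \epsilon(s([i])) - \epsilon(s([\infty])) \;\in\; \Z ,
\]
where $[i],[\infty] \in \Z[\Noo]$ are the generators at $i$ and at $\infty$. Morally $D$ records how the section treats $\infty$ relative to the ``diagonal'' point $i$, which is sensitive to the fibre of $\pi$ over $\infty$ doubling — and that doubling happens exactly when $i=\infty$. Because $\Z$ has decidable equality, I may case-split on $D = 0$ versus $D \neq 0$, so the whole proof reduces to computing $D$ in the two situations $i = \infty$ and $i \neq \infty$ and reading off the dichotomy.

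First I would treat the two cases. If $i = \infty$ then $[i] = [\infty]$, hence $s([i]) = s([\infty])$ and $D = 0$; so $i = \infty \Rightarrow D = 0$. If instead $i \neq \infty$, the defining condition of $X_i$ collapses to $e = 1 \iff i = j$, so the second coordinate of a point of $X_i$ is determined by its first. Thus $\pi\colon X_i \to \Noo$ is a monomorphism, and since the free abelian group functor preserves monomorphisms, so is $\Z[\pi]$. As $s$ is a section, $\Z[\pi]$ is moreover a split epimorphism, hence an isomorphism in the abelian category $\Ab(\lcond)$, and $s = \Z[\pi]^{-1}$ is forced. The unique preimages are then $s([i]) = [(i,1)]$ and $s([\infty]) = [(\infty,0)]$, giving $\epsilon(s([i])) = 1$ and $\epsilon(s([\infty])) = 0$, so $D = 1$; thus $i \neq \infty \Rightarrow D = 1$.

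It then remains to run the case split. If $D \neq 0$, the first implication gives $\neg(i = \infty)$, i.e.\ $i \neq \infty$. If $D = 0$, the second implication gives $\neg(i \neq \infty)$, i.e.\ $\neg\neg(i = \infty)$, and the main obstacle is to upgrade this to $i = \infty$ without excluded middle. This is legitimate because $i = \infty$ is a $\neg\neg$-stable proposition: each finite point of $\Noo$ is isolated, so the subobject $\{x : x \notin \{0,\dots,n-1\}\} \emb \Noo$ is complemented, and $i = \infty$ is equivalent to the conjunction $\forall n\,(i \notin \{0,\dots,n-1\})$ of decidable — hence stable — propositions, which is therefore itself stable. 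Discharging the double negation this way completes the argument; I expect this stability step to be the delicate point, and it is worth noting that it uses only $\neg\neg$-stability and never LPO, in keeping with the failure of LPO in $\lcond$.
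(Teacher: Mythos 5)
Your proof is correct, but it takes a genuinely different route from the paper's. The paper works directly with $s([i])$ and $s([\infty])$ as arbitrary linear combinations and uses the collecting-like-terms characterization of equality in free abelian groups (\cref{eq-coprod}) to reduce them to single generators, after which the dichotomy is read off from the $\{0,1\}$-components. You avoid analysing linear combinations altogether: under the hypothesis $i \ne \infty$ the projection $\pi$ is monic, so $\Z[\pi]$ is both a monomorphism and a split epimorphism, hence an isomorphism, which forces the value of $s$; your integer $D$ then turns everything into a case split on the decidable equality of $\Z$, with the $D = 0$ branch closed by $\neg\neg$-stability of $i = \infty$. Two caveats on what this costs. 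First, the assertion that ``the free abelian group functor preserves monomorphisms'' is not free of charge in the internal logic of $\lcond$: it is true, but it needs justification --- either externally (in each slice of $\lcond$, a mono of sheaves induces a levelwise injection of presheaf-level free abelian groups, and sheafification is left exact), or internally by descending a collecting-like-terms witness along an injection, which again rests on \cref{eq-coprod}. So your argument does not really dispense with that lemma; it relocates it inside the mono-preservation claim, and a complete write-up should say so. Second, the stability step you flag as delicate is correct, and in fact the paper's own proof uses the same fact silently: there, a membership such as $(i,0) \in X_i$ unfolds to $\neg(i \ne \infty)$, i.e.\ $\neg\neg(i = \infty)$, rather than to $i = \infty$, so the paper's phrase ``so that $i = \infty$'' also appeals to stability of $i = \infty$ (which holds because the relevant subobject of $\Noo$ is closed, or by your argument via the complemented clopens $\{0,\dots,n-1\} \emb \Noo$). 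What your route buys is a cleaner conceptual picture --- away from $\infty$ the section is unique, and $D$ detects the doubled fibre over $\infty$ --- together with an explicit treatment of the stability point; what the paper's route buys is self-containedness, with \cref{eq-coprod} as its only algebraic input.
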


\begin{proof}
We denote by $[x]$ the generator of a free abelian group $\Z[X]$ given by $x \in X$.
Let $a \coloneqq s([i])$ and $b \coloneqq s([\infty])$. We have that $\Z[\pi](a) = [i]$,
	$\Z[\pi](b) = [\infty]$, and $a = b$ if $i = \infty$.
Write $a$ as a linear combination $\sum_k n_k [x_k]$ with $x_k \in X_i$,
so that $\Z[\pi](a) = \sum_k n_k [\pi(x_k)]$.

As we will see in \cref{eq-coprod}, the equation $\sum_k n_k [\pi(x_k)] = [i]$
is witnessed explicitly by a way of collecting like terms in the left hand side.
Importantly, if we have $\pi(x_k) = \pi(x_l)$ in $\Noo$, 
	then we can test whether $x_k = x_l$ in $X_i$;
this holds iff the components of $x_k$ and $x_l$ lying in $\{0,1\}$ are equal.
In the event that we find $x_k \ne x_l$, we must have that these points are
$(\infty, \infty, 0)$ and $(\infty, \infty, 1)$, so that in particular $i = \infty$.
If $i = \infty$ then there is nothing more to prove,
so we may suppose that in collecting like terms,
we in fact have $x_k = x_l$, and we can collect like terms already in $\Z[X_i]$.
That is, we may assume $a = [x]$ with $x \in X_i$ such that $\pi(x) = i$.
Similarly we may assume that $b = [y]$ with $y \in X_i$ such that $\pi(y) = \infty$.

Consider now the second components of $x$ and $y$, lying in $\{0,1\}$. If the second component of $x$
is $0$, then we have $(i, 0) \in X_i$ so that $i = \infty$ and we are done.
Similarly if the second component of $y$ is $1$, then we have $(\infty, 1) \in X_i$
and so $i = \infty$. In the remaining case, we have
$x = (i,1)$ and $y = (\infty, 0)$. In particular $x \ne y$ so that $a \ne b$ and $i \ne \infty$,
	finishing the proof.
\end{proof}

The following lemma should be read constructively since we apply it in the
internal logic of $\lcond$. We use it in the proof of \cref{key-internal}
in the case where $A_x = \Z$ for all $x$ so that $\bigoplus_{x \in X} A_x \cong \Z[X]$.
This result essentially appears in \cite{internal-coprod} and \cite{univalent-cat}
together with the proof sketched below. 
\begin{lemma}\label{eq-coprod}
Suppose we are given a set $X$, a family $A$ of abelian groups over $X$,
		and two elements $\sum_{i \in I} \iota_{x_i}(a_i)$ and $\sum_{j \in J} \iota_{y_j}(b_j)$
		of the coproduct $\bigoplus_{x \in X} A_x$. (Here $I$ and $J$ are finite sets,
				$x_i, y_j \in X$, $a_i \in A_{x_i}$, $b_j \in A_{y_j}$, and $\iota_y$ denotes the
				coproduct coprojection from $A_y$ to $\bigoplus_{x \in X} A_x$).
		Then we have
		\[ \sum_{i \in I} \iota_{x_i}(a_i) = \sum_{j \in J} \iota_{y_j}(b_j) \]
		if and only if this equation can be explained by `collecting like terms'.
		Explicitly, this means we have a third finite set $K$
		with $z_k \in X$ for $k \in K$
		and maps $f : I \to K$, $g : J \to K$ such that
		$x_i = z_{f(i)}$, $y_j = z_{g(j)}$, and
		$\sum_{f(i) = k} a_i = \sum_{g(j) = k} b_j$ in $A_{z_k}$ for all $k \in K$.
\end{lemma}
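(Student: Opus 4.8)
The plan is to realise the coproduct $\bigoplus_{x \in X} A_x$ explicitly as the set-quotient of formal finite sums by the very relation described in the statement, and then to read off the characterisation of equality. Write $\approx$ for the relation that holds between two formal sums $\sum_{i \in I}\iota_{x_i}(a_i)$ and $\sum_{j \in J}\iota_{y_j}(b_j)$ exactly when a common collection $(K, z, f, g)$ as in the statement exists. The \`if' direction is then the observation that such a collection forces the two sums to be equal already in $\bigoplus_x A_x$, since
\[ \sum_{i \in I}\iota_{x_i}(a_i) = \sum_{k \in K}\iota_{z_k}\Bigl(\textstyle\sum_{f(i) = k} a_i\Bigr) = \sum_{k \in K}\iota_{z_k}\Bigl(\textstyle\sum_{g(j) = k} b_j\Bigr) = \sum_{j \in J}\iota_{y_j}(b_j), \]
using only that each $\iota_{z_k}$ is a homomorphism and that summing over the decidable fibres of the maps $f, g$ between finite sets is legitimate.

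For the converse I would show that $\approx$ is a congruence on formal sums (under concatenation) and that the resulting quotient is the coproduct. Reflexivity (take $K = I$, $f = g = \id$), symmetry (swap $f$ and $g$), compatibility with concatenation (take disjoint unions of collections), and the presence of inverses (the sum $\sum_i \iota_{x_i}(-a_i)$ is an $\approx$-inverse, collected over $K = I$) are all routine, and notably use nothing about $X$ beyond the equalities supplied in the hypotheses. Granting that $\approx$ is an equivalence relation, its set-quotient $G$ is an abelian group equipped with coprojections $\iota_x \colon A_x \to G$, and the same computation, performed with a family of homomorphisms $\phi_x \colon A_x \to B$ in place of the coprojections $\iota_x$, shows that any such family extends along the $\iota_x$ to a well-defined homomorphism $G \to B$; uniqueness is immediate since the $\iota_x(a)$ generate. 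Thus $G$ satisfies the universal property of $\bigoplus_x A_x$, so equality of formal sums in $\bigoplus_x A_x$ coincides with $\approx$ --- which is the assertion.

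The one step demanding genuine work, and the only place where constructivity is delicate, is transitivity of $\approx$; this I expect to be the main obstacle. Suppose $P \approx Q$ is witnessed by $(K_1, z^1, f_1 \colon I \to K_1, g_1 \colon J \to K_1)$ and $Q \approx R$ by $(K_2, z^2, f_2 \colon J \to K_2, g_2 \colon L \to K_2)$, where $P, Q, R$ are indexed by $I, J, L$. The idea is to amalgamate the two collections over the middle index set $J$: let $K_3$ be the quotient of $K_1 \sqcup K_2$ by the equivalence relation generated by $g_1(j) \sim f_2(j)$ for $j \in J$, write $[-]$ for the quotient map, and put $f_3(i) = [f_1(i)]$, $g_3(l) = [g_2(l)]$. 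Since $z^1_{g_1(j)} = y_j = z^2_{f_2(j)}$, the labels agree on the generating pairs and descend to $z^3 \colon K_3 \to X$. For the value condition one computes, for each class $c$,
\[ \sum_{f_3(i) = c} a_i = \sum_{[k] = c}\ \sum_{f_1(i) = k} a_i = \sum_{[k] = c}\ \sum_{g_1(j) = k} b_j = \sum_{[g_1(j)] = c} b_j, \]
using the value equation of the first collection in the middle step, with $k$ ranging over $K_1$; symmetrically $\sum_{g_3(l) = c} c_l = \sum_{[f_2(j)] = c} b_j$, and these two agree because $[g_1(j)] = [f_2(j)]$ for all $j$ by construction of $\sim$. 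The finite sets $K_1, K_2, J$ have decidable equality, so $K_3$ is again finite with decidable equality and every fibrewise sum is meaningful, while decidability of equality in $X$ is never used. Once transitivity is secured, the remaining verifications are exactly the bookkeeping of the previous paragraph.
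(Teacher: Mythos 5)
Your proof is correct, but it takes a genuinely different route from the paper's. You build the coproduct explicitly as formal sums modulo the `collecting like terms' relation $\approx$, show $\approx$ is a congruence --- with transitivity, correctly identified as the crux, handled by amalgamating the two witnessing collections over the middle index set (the quotient of $K_1 \sqcup K_2$ by the relation generated by $g_1(j) \sim f_2(j)$, i.e.\ the pushout $K_1 \sqcup_J K_2$ of finite sets) --- and then verify the universal property of the quotient, so that equality in $\bigoplus_{x \in X} A_x$ is $\approx$ by uniqueness of the coproduct. The paper instead realises $\bigoplus_{x \in X} A_x$ as a filtered colimit: it takes $\C$ to be the category of finite sets $I$ equipped with a map $x : I \to X$ (filtered because finite sets admit finite colimits), writes $X \cong \colim_{(I,x) \in \C} I$, notes that the coproduct functor preserves this colimit and that $U : \Ab \to \Set$ preserves filtered colimits, and reads off the criterion from the standard fact that two elements of a filtered colimit of sets are equal iff they become equal at some common stage. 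The combinatorial core is the same --- your pushout of finite sets is exactly what witnesses the filteredness of $\C$ in the paper's argument --- but the packaging differs: your argument is self-contained and makes the constructive bookkeeping explicit (decidability of equality on finite sets, decidability of the generated equivalence relation, hence finiteness of $K_3$), at the cost of verifying the universal property by hand; the paper's is shorter and delegates the delicate constructive points to cited general facts about filtered colimits. Both are valid constructively, which is what the application in the internal logic of $\lcond$ requires.
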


\begin{proof}[Proof sketch]
We use filtered colimits to describe the coproduct following \cite{internal-coprod,
univalent-cat}. Explictly, let $\C$ denote the category of finite sets $I$ equipped with a map
$x : I \to X$. A map $(I, x) \to (J, y)$ in $\C$ consists of a map $f : I \to J$ such 
that $x = y \circ f$. Because the category of finite sets is finitely cocomplete, so is $\C$,
	 and so in particular $\C$ is filtered. We have $X \cong \colim_{(I, x) \in \C} I$ in the
	 evident way.
	 The coproduct functor preserves this colimit in the sense that
\[
	\bigoplus_{x \in X} A_x \cong \colim_{(I, x) \in \C} \bigoplus_{i \in I} A_{x(i)}.
\]
Since finite coproducts are products and the forgetful functor $U : \Ab \to \Set$ preserves filtered colimits, we have
\[
	U\bigoplus_{x \in X} A_x \cong \colim_{(I, x) \in \C} \prod_{i \in I} UA_{x(i)}.
\]
Thus our two elements of the left-hand side are equal if and only if they are equal viewed as elements
of the right-hand side.
The desired claim follows from the fact that two elements of a filtered colimit of sets are equal if and only if
they map to equal elements under some pair of maps with common codomain.
\end{proof}

\begin{remark}
We had to be a bit more careful than might be expected in the statement 
and proof of \cref{eq-coprod} since we are working
constructively. For example, it does not make sense to talk about the sum
$\sum_{i \in I, x_i = x_{i_0}} a_i$ for $i_0 \in I$ 
unless we have a way of testing equality $x_i = x_{i_0}$
in $X$ (since we need the sum to be finitely indexed). Similarly we cannot 
meaningfully talk about collecting `all possible' like terms.
\end{remark}

\section{Internally projective objects}\label{object}
In this concluding section we give a proof that any internally projective object
of a topos is invariant-internally projective. 
See also \cite{simpson} for another explanation of this fact.

\begin{lemma}
Suppose $P \in \E$ is an internally projective object of a topos $\E$.
Then $P$ is invariant-internally projective.
\end{lemma}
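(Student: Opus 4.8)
The plan is to unwind invariant-internal projectivity by means of the local characterisation of internal projectivity recalled in the introduction, and then to observe that, for \emph{objects} (as opposed to group objects), a lift produced in $\E$ automatically respects the slice structure. The proof is short; its entire content lies in (i) a compatibility check showing that a lift produced in $\E$ is a morphism over $S$, and (ii) recognising why one is entitled to pass from $\E/S$ back to $\E$ in the first place — this last point being exactly what distinguishes objects from group objects, and hence where I expect the main conceptual obstacle to be.

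First I would spell out what it means for $S^\star P$ to be internally projective in $\E/S$. Recall that for an object $r : R \to S$ of $\E/S$ the double slice $(\E/S)/(R,r)$ is canonically $\E/R$, and under this identification the base change of $S^\star P$ becomes $R^\star P$. Feeding this into the local characterisation of internal projectivity, applied inside the topos $\E/S$ to the object $S^\star P$, gives the following reformulation: $S^\star P$ is internally projective in $\E/S$ if and only if for every epimorphism $f : X \epito Y$ of $\E/S$, every $r : R \to S$, and every morphism $y : R \times P \to Y$ \emph{over $S$}, there exist a cover $t : T \epito R$ and a map $x : T \times P \to X$ \emph{over $S$} with $f \circ x = y \circ (t \times \id_P)$. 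Thus $P$ is invariant-internally projective exactly when this holds for all $S$.

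Given such data, I would produce the lift by discarding the maps to $S$. Viewing $f : X \epito Y$ as an epimorphism of $\E$ and $y : R \times P \to Y$ as a morphism of $\E$, internal projectivity of $P$ in $\E$ — applied with $R$ in the role of the parameter object — supplies a cover $t : T \epito R$ and a map $x : T \times P \to X$ in $\E$ with $f \circ x = y \circ (t \times \id_P)$; note that $t$, equipped with the structure map $r \circ t$, is automatically a cover in $\E/S$. It then remains only to check that $x$ is a morphism over $S$. Since $f$ lies over $S$, the structure map of $X$ factors as $X \xrightarrow{f} Y \to S$; hence $(X \to S) \circ x = (Y \to S) \circ f \circ x = (Y \to S) \circ y \circ (t \times \id_P)$, and as $y$ lies over $S$ the right-hand side equals $r \circ \mathrm{pr}_R \circ (t \times \id_P) = r \circ t \circ \mathrm{pr}_T$, which is precisely the structure map of $T \times P$ over $S$. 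So $x$ is a morphism of $\E/S$ and furnishes the required lift.

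The step that must be handled with care — and the place the argument is genuinely special to objects — is the passage from $\E/S$ to $\E$ in the previous paragraph. This is legitimate because an object of $\E/S$ is literally an object of $\E$ equipped with a map to $S$, so that the forgetful functor $S_! : \E/S \to \E$ carries the epimorphism $f$ to an epimorphism of $\E$ \emph{between objects of $\E$}, to which the internal projectivity of $P$ in $\E$ directly applies; the resulting lift is controlled entirely by $f$, which is what forces it to lie over $S$. For abelian group objects this first move is unavailable: an object of $\Ab(\E/S)$ is a bundle of groups over $S$ rather than a group object of $\E$ with a map to $S$, so an epimorphism of $\Ab(\E/S)$ is not an epimorphism of $\Ab(\E)$ and the internal projectivity of $P$ in $\Ab(\E)$ cannot be brought to bear. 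This is exactly the gap that \cref{main-result} shows cannot in general be closed.
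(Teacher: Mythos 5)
Your proof is correct, and its skeleton matches the paper's: transport the lifting problem along the forgetful functor $\E/S \to \E$, apply internal projectivity of $P$ in $\E$ to the underlying epimorphism, and carry the resulting local lift back to the slice. Where you genuinely diverge is in how the transport back is justified. The paper proves a small lemma --- the naturality squares of the unit of $S_! \dashv S^\star$ are cartesian, so $f$ is a pullback of $S^\star S_! f$ --- and then reduces lifting against $f$ in $\E/S$ to lifting against $S^\star S_! f$, which internal projectivity handles. You bypass that lemma entirely: since $f$ is a morphism over $S$, the structure map $X \to S$ equals the composite of $f$ with the structure map $Y \to S$, so any lift $x$ in $\E$ satisfying $f \circ x = y \circ (t \times \id_P)$ is automatically a morphism over $S$. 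These are two packagings of one and the same fact (the cartesian square says precisely that a map into $X$ over $S$ is a map into $X$ in $\E$ whose composite with $f$ lies over $S$), but your version is shorter and more elementary. What the paper's abstraction buys is a sharper diagnosis of the abelian case: your closing remark correctly observes that there is no forgetful functor $\Ab(\E/S) \to \Ab(\E)$, so the naive transport never gets started; the paper goes further and notes that the adjoint replacement for $S_!$, namely Harting's internal coproduct $\bigoplus_S \dashv S^\star$, does exist, and isolates exactly why the argument still fails --- the unit squares of that adjunction are not cartesian, the pullback acquiring a contribution from $\bigoplus_S \ker(f)$. Both diagnoses are consistent with \cref{main-result}; the paper's pinpoints which property an adjoint-based repair would need and lacks.
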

\begin{proof}
Let $S \in \E$ be another object of $\E$, $X, Y : \E / S$ objects in the slice
over $S$, $f : X \epito Y$ an epimorphism in the slice, and $y : S^\star P \to Y$
an arbitrary morphism in the slice.
We have to show that, after pulling back to a cover $T \epito S$, we can lift $y$ along 
$f$.

Consider the following naturality square involving the unit of the adjunction
$S_! \dashv S^\star$.
% https://q.uiver.app/#q=WzAsNCxbMCwwLCJYIl0sWzEsMCwiU15cXHN0YXIgU18hIFgiXSxbMCwxLCJZIl0sWzEsMSwiU15cXHN0YXIgU18hIFkiXSxbMCwxXSxbMCwyXSxbMiwzXSxbMSwzXSxbMCwzLCIiLDEseyJzdHlsZSI6eyJuYW1lIjoiY29ybmVyIn19XV0=
\[\begin{tikzcd}[ampersand replacement=\&,cramped]
	X \& {S^\star S_! X} \\
	Y \& {S^\star S_! Y}
	\arrow[from=1-1, to=1-2]
	\arrow[from=1-1, to=2-1]
	\arrow["\lrcorner"{anchor=center, pos=0.125}, draw=none, from=1-1, to=2-2]
	\arrow[from=1-2, to=2-2]
	\arrow[from=2-1, to=2-2]
\end{tikzcd}\]
We claim that the above square is cartesian. Since $S_! : \E / S \to S$
creates pullbacks it suffices to show that the square is cartesian after applying
$S_!$. We have $S_! S^\star S_! X \cong S \times S_! X$ and that
$S_! X \to S_! S^\star S_! X$ is naturally a pullback of the diagonal
$S \to S \times S$. In this way the claim follows from pullback pasting and cancellation.

Because $f : X \to Y$ is a pullback of $S^\star S_! f$, the problem of lifting
$y$ against $f$ (up to a cover of $S$) reduces to a lifting problem against
$S^\star S_! f$ (up to a cover of $S$). Because $S_! f : S_! X \to S_! Y$ is an
epimorphism in $\E$, we can conclude by simply appealing to the internal
projectivity of $P$.
\end{proof}

Our reason for presenting the proof abstractly in terms of the adjoints $S_!
\dashv S^\star$ is to make clear what goes wrong if we look at abelian group
objects. In this case, we again have an adjunction $\bigoplus_S \dashv
S^\star$ between $\Ab(\E / S)$ and $\Ab(\E)$. The
problem is that the naturality squares for the unit $\id_{\Ab(\E / S)} \to
S^\star \bigoplus_S$ are generally \emph{not} cartesian.
Indeed, if $f : X \to Y$ is a morphism in $\Ab(\E / S)$,
then the pullback $Y \times_{S^\star \bigoplus_S Y} (S^\star \bigoplus_S X)$
contains a contribution from the direct sum $\bigoplus_S \ker(f)$ of the kernel of $f$.

% TODO change definition of the site underlying LCond
% TODO lemmas to factor out:
% the map 1 + N -> Noo is not epic
% a statement about free abelian groups, constructively
% ...

\printbibliography
\end{document}